\def\Zp{{\mathbb Z}_p}
\def\Fp{{{\mathbb F}_p}}
\def\Fq{{{\mathbb F}_q}}
\newtheorem{cor}{Corollary}
\newtheorem{prop}{Proposition}
\newtheorem{theorem}{Theorem}
\theoremstyle{definition}
\newtheorem{defn}{Definition}   
\newtheorem{question}{Question}
\theoremstyle{remark}
\newtheorem{rems}{Remarks}      
\newtheorem{example}{Example}
\begin{document}
\title[Polynomials of Binomial Type and Lucas' Theorem]
{Polynomials of Binomial Type and Lucas' Theorem}
\author{David Goss}
\address{Department of Mathematics\\The Ohio State University\\231
W.\ $18^{\rm th}$ Ave.\\Columbus, Ohio 43210}

\email{dmgoss@gmail.com}

\date{December 10, 2014}

\begin{abstract}
We present various constructions of sequences of polynomials satisfying the Binomial Theorem in
finite characteristic based on the theory of additive polynomials. Various actions on
these constructions are also presented. It is an open question whether we then have 
accounted for all sequences in finite characteristic which satisfy the Binomial 
Theorem.

\end{abstract}

\maketitle

\section{Introduction}\label{intro} 
Inspired by classical work on $p$-adic measure theory, and, in particular, the connection
with the Mahler expansion of continuous functions, we discussed measure theory in
finite characteristics in \cite{go1}. In characteristic $p$, an 
analog of the Mahler expansion was given by Wagner \cite{wa1}
using the basic Carlitz polynomials (see Definition
\ref{c12} below). By construction, the Carlitz polynomials, which are
created out of additive functions via digit expansions, satisfy the Binomial
Theorem as a consequence of Lucas' famous congruence (see Theorem \ref{c1}). 
This then allowed the author, together with Greg Anderson, to compute the associated
convolution measure algebra which is then isomorphic to the algebra of formal divided power series
(Definition \ref{b5} below), as opposed to the ring of formal power series classically
given by Mahler's result.

Inspired further by the present work of  Nguyen Ngoc Dong Quan \cite{dq1}, we have recently
revisited our results from the viewpoint of the umbral calculus and the theory
of sequences of polynomials satisfying the Binomial Theorem. It is indeed quite remarkable
that the classical {\it generating function} of such a sequence (see Subsection \ref{gen})
is also an element of the algebra of divided power series.  

In this note we show how the Carlitz construction allows us to obtain a very large
subspace of sequences satisfying the Binomial Theorem. We also show that 
this space is closed under multiplication in the algebra of divided power series.
Moreover, still using additive polynomials, we are able to construct many other examples of
sequences satisfying the Binomial Theorem which do not arise from the Carlitz 
method; indeed, this second construction appears to be largely complimentary to that
of Carlitz. We discuss various actions on the
space of divided elements and how they relate to our constructions. Along the way,
we derive a decomposition of the divided power series associated to Dirac measures
(see Remarks \ref{c18.1} below).
 It is now an open and very interesting question whether we then have constructed
enough sequences of binomial type out of additive functions to generate all of them.

Finally, in Remarks \ref{fed}, we present an umbral construction of the Carlitz module due to 
F.\ Pellarin.

We thank Nguyen Ngoc Dong Quan, Federico Pellarin, and Rudy Perkins for their help
in the preparation of this note.\newpage

\section{General Theory}\label{gen}
\subsection{Basic notions}\label{basic}
Let $F$ be a field and let $F[x]$ be the polynomial ring in one indeterminate over $F$. Let $P:=\{p_i(x)\}_{i=0}^\infty$
be a sequence of polynomials.  
\begin{defn}\label{b1}
We say that {\it $P$ satisfies the Binomial Theorem} if and only if $p_n(x+y)=\sum_{i=0}^n {n \choose i}p_i(x) p_{n-i}(y)$,
for all $n\geq 0$. We let $\frak B$ be the set of all sequences satisfying the Binomial Theorem. \end{defn}

Obviously $\{x^i\}_{i=0}^\infty\in \frak B$ as does the trivial sequence $P_0:=\{0\}_{i=0}^\infty$. Another standard, 
nontrivial, example is the sequence $\{(x)_n\}$
where $(x)_n$ is the {\it Pochhammer symbol} defined as $(x)_n:=x(x-1)(x-2)\cdots (x-n+1)$. For more,
we refer the reader to \cite{rt1}.

I thank Nguyen Ngoc Dong Quan for the proof of the following simple result.

\begin{prop}\label{b2}
Let $P=\{p_i(x)\}_{i=0}^\infty\in \frak B$ be nontrivial. Then  $p_0(x)\equiv 1$. \end{prop}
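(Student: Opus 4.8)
The plan is to extract everything from the defining identity of $\mathfrak{B}$ by specializing the index $n$ and the variables. First I would set $n = 0$ in Definition \ref{b1}, which, since $\binom{0}{0} = 1$, collapses to the single functional equation $p_0(x+y) = p_0(x)p_0(y)$. This already isolates $p_0$ from the rest of the sequence and is the only instance of the Binomial Theorem needed to get started.

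Next I would show that this equation forces $p_0$ to be a constant polynomial equal to $0$ or $1$. The cleanest route is a total-degree comparison: if $\deg p_0 = d$, then, viewed as an element of $F[x,y]$, the left-hand side $p_0(x+y)$ has total degree $d$, whereas the right-hand side $p_0(x)p_0(y)$ has total degree $2d$, its leading monomial being a nonzero multiple of $x^d y^d$. Equality of these two polynomials forces $2d = d$, hence $d = 0$, so $p_0 \equiv c$ for some $c \in F$; substituting back gives $c = c^2$, i.e.\ $c \in \{0,1\}$. Equivalently, one may substitute $y = 0$ to obtain $p_0(x)\bigl(1 - p_0(0)\bigr) = 0$ and then argue separately that $p_0$ is constant. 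This step is characteristic-free, since only the coefficient $\binom{0}{0} = 1$ is involved.

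It then remains to rule out $p_0 \equiv 0$ using the hypothesis that $P$ is nontrivial; this is the one place where the full sequence enters, and I expect it to be the main, though still elementary, point. Assuming $p_0 \equiv 0$, I would prove by strong induction that $p_n \equiv 0$ for every $n$, contradicting nontriviality. For the inductive step, suppose $p_j \equiv 0$ for all $j < n$ and examine the identity $p_n(x+y) = \sum_{i=0}^n \binom{n}{i} p_i(x) p_{n-i}(y)$: every summand with $i < n$ vanishes because $p_i \equiv 0$, while the remaining summand $i = n$ equals $p_n(x)p_0(y) = 0$. Hence the entire right-hand side is zero, giving $p_n(x+y) \equiv 0$ and therefore $p_n \equiv 0$. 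Thus $p_0 \equiv 0$ would force $P = P_0$, the trivial sequence, which is excluded by hypothesis. Consequently $p_0 \equiv 1$, as claimed.
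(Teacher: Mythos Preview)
Your argument is correct and follows essentially the same outline as the paper's: derive the multiplicativity $p_0(x+y)=p_0(x)p_0(y)$, show $p_0$ is a constant in $\{0,1\}$, and rule out $0$ by the same induction on the Binomial identity. The only cosmetic difference is that you prove constancy by a total-degree comparison in $F[x,y]$, whereas the paper uses $p_0(x)p_0(-x)=p_0(0)=1$ to see that $p_0$ is a unit in $F[x]$.
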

\begin{proof}
Assume first that $p_0(x)$ is not identically $0$.
Note that, by definition, $p_0(x+y)=p_0(x)p_0(y)$. Upon setting $y=0$ we obtain
$p_0(x)=p_0(x)p_0(0)$.  By assumption, $p_0(x)$ is nonzero for some $x$;
thus $p_0(0)=1$.  On the other hand, $p_0(x-x)=p_0(x)p_0(-x)=p_0(0)=1$; as 
$p_0(x)$ is a polynomial, we conclude conclude that it is constant. Thus $p_0(x)$ is
identically $1$.

It remains to conclude that $p_0(x)$ is not identically $0$. But if it does identically
vanish, a simple use of
the Binomial Theorem and induction, establishes that all $p_i(x)$ are also  trivial 
contradicting our assumption. 

\end{proof}

\noindent
We let $\frak B^\ast\subset \frak B$ be the subset of nontrivial sequences.

\subsection{The generating function}\label{gen}
Let $\{\frak D_i\}_{i=0}^\infty$ be the (nontrivial) divided power symbols with the property that
\begin{equation}\label{b3}
\frak D_j\cdot \frak D_j:={i+j\choose j} \frak D_{i+j}\,.\end{equation}
Thus in characteristic $0$ one may view $\frak D_i$ as $x^i/i!$ and in all 
characteristics one can view $\frak D_i$ as the divided differential operator given by
\begin{equation}\label{b4}
\frak D_i x^n:={n \choose i} x^{n-i}\,.
\end{equation}

\begin{defn}\label{b5}
Let $R$ be a commutative ring with unit. We let $R\{\{\frak D\}\}$ be the commutative ring of formal series
$\sum a_i \frak D_i$ with the obvious addition and multiplication.
\end{defn}
We topologize $R\{\{\frak D\}\}$ by using the descending chain of ideals
$M_i=\{{\frak D}_i,\frak D_{i+1}.\cdots\}$ for which it is complete.

Now let $H=\{h_i(x)\}$ be an arbitrary sequence of polynomials in $F[x]$.
\begin{defn}\label{b6}
We set
\begin{equation}\label{b7}
f_H(x):=\sum_{i=0}^\infty h_i(x)\frak D_i\in F[x]\{\{\frak D\}\}\,.
\end{equation} \end{defn}\noindent

The following result is well-known and the proof follows immediately from the
definitions.
\begin{prop}\label{b8}
We have 
\begin{equation}\label{b9}
f_H(x+y)=f_H(x)f_H(y)
\end{equation}
if and only if $H\in \frak B$.
\end{prop}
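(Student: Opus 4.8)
The plan is to prove the equivalence in Proposition \ref{b8} by directly expanding the product $f_H(x)f_H(y)$ using the multiplication rule \eqref{b3} for the divided power symbols and comparing coefficients of each $\frak D_n$ with those of $f_H(x+y)$. First I would write out both sides as formal series in the $\frak D_i$. The left-hand side is $f_H(x+y)=\sum_{n=0}^\infty h_n(x+y)\frak D_n$, so the coefficient of $\frak D_n$ is simply $h_n(x+y)$. For the right-hand side, I would compute
\begin{equation*}
f_H(x)f_H(y)=\Bigl(\sum_{i=0}^\infty h_i(x)\frak D_i\Bigr)\Bigl(\sum_{j=0}^\infty h_j(y)\frak D_j\Bigr)=\sum_{i,j\geq 0} h_i(x)h_j(y)\,\frak D_i\frak D_j,
\end{equation*}
and then apply \eqref{b3}, namely $\frak D_i\frak D_j={i+j\choose j}\frak D_{i+j}$, to collect terms by the total degree $n=i+j$.

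Carrying out that collection, the coefficient of $\frak D_n$ on the right becomes $\sum_{i=0}^n {n\choose i} h_i(x)h_{n-i}(y)$, where I have used ${i+j\choose j}={n\choose i}$ when $i+j=n$. Thus the key computation reduces the identity $f_H(x+y)=f_H(x)f_H(y)$ to the statement that for every $n\geq 0$ the coefficients of $\frak D_n$ agree, i.e.
\begin{equation*}
h_n(x+y)=\sum_{i=0}^n {n\choose i} h_i(x)\,h_{n-i}(y).
\end{equation*}
By Definition \ref{b1}, this system of identities (holding for all $n$) is precisely the condition $H\in\frak B$. Since two elements of the ring $F[x]\{\{\frak D\}\}$ are equal if and only if all their $\frak D_n$-coefficients coincide, the equivalence follows at once in both directions.

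The only genuinely delicate point, and the step I would be most careful about, is the manipulation of the double sum under the topology on $F[x]\{\{\frak D\}\}$: I must justify that reindexing $\sum_{i,j}$ by $n=i+j$ and extracting the coefficient of each $\frak D_n$ is legitimate. This is where completeness with respect to the descending chain of ideals $M_i$ matters, since it guarantees that the product of two series is well-defined and that its $\frak D_n$-coefficient is the finite sum $\sum_{i+j=n}$ with no convergence issues—each fixed $n$ receives contributions only from the finitely many pairs $(i,j)$ with $i+j=n$. Once this bookkeeping is in place, the proof is entirely formal, and as the paper notes the result is well known; the content is simply recognizing that the Cauchy-product structure of $R\{\{\frak D\}\}$ encodes the Binomial Theorem exactly. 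I therefore expect the write-up to be short, with the main obstacle being purely the careful indexing of the double sum rather than any substantive difficulty.
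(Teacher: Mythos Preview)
Your argument is correct and is exactly the unpacking of what the paper means by ``the proof follows immediately from the definitions'': expand the Cauchy product using \eqref{b3} and compare $\frak D_n$-coefficients with Definition~\ref{b1}. There is nothing to add; the paper gives no further details beyond this.
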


Let $\frak {BD}^\ast$ be the set $\{f_P(x)\}_{P\in \frak B^\ast}$; in this case,
by Proposition \ref{b2},  we know that $f_P(x)=1+$\{higher terms in $\frak D$\}.
\begin{theorem}\label{b10}
The set $\frak {BD}^\ast$ forms an abelian group under multiplication.
\end{theorem}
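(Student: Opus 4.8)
The plan is to recognize $\frak{BD}^\ast$ as a subset of the group of units of the commutative ring $F[x]\{\{\frak D\}\}$ and to use Proposition \ref{b8} as the dictionary between the ring multiplication and the Binomial Theorem. Concretely, an element $g(x)=\sum_i h_i(x)\frak D_i$ lies in $\frak{BD}^\ast$ if and only if it satisfies the two conditions (i) $g(x+y)=g(x)g(y)$ and (ii) $h_0(x)\not\equiv 0$. Indeed, (i) says exactly that $H=\{h_i\}\in\frak B$ by Proposition \ref{b8}, while (ii) together with Proposition \ref{b2} forces $H$ to be nontrivial, i.e.\ $H\in\frak B^\ast$ with $h_0\equiv 1$; conversely every element of $\frak{BD}^\ast$ satisfies both. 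Since associativity and commutativity are inherited for free from the ambient commutative ring, it remains only to verify closure, the existence of an identity, and the existence of inverses within this characterized subset.

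Closure and the identity are immediate from the functional equation. If $g_1=f_P$ and $g_2=f_Q$ with $P,Q\in\frak B^\ast$, then commutativity gives $(g_1g_2)(x+y)=g_1(x+y)g_2(x+y)=g_1(x)g_1(y)g_2(x)g_2(y)=(g_1g_2)(x)(g_1g_2)(y)$, so $g_1g_2$ satisfies (i); its $\frak D_0$-coefficient is the product $1\cdot 1=1$ of the two constant terms, so it satisfies (ii). Hence $g_1g_2\in\frak{BD}^\ast$. The multiplicative identity $1\in F[x]\{\{\frak D\}\}$ equals $f_P$ for the sequence $P=\{1,0,0,\dots\}$, which one verifies directly to lie in $\frak B^\ast$, so the identity belongs to the set.

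The substantive step is the construction of inverses, and this is where I expect the only real work. Since $f_P(x)=1+\{$higher terms in $\frak D\}$ has unit constant term and $F[x]\{\{\frak D\}\}$ is complete with respect to the filtration by the ideals $M_i$, the higher part lies in $M_1$ and a Neumann (geometric) series argument produces an inverse $g(x):=f_P(x)^{-1}$ with coefficients in $F[x]$; again $g(x)=1+\{$higher$\}$, so $g$ satisfies (ii). The remaining point is to transfer (i) through inversion. For this I would use that the substitution $x\mapsto x+y$ defines a continuous ring homomorphism $\varphi\colon F[x]\{\{\frak D\}\}\to F[x,y]\{\{\frak D\}\}$ fixing each $\frak D_i$. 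Applying $\varphi$ to the identity $g(x)f_P(x)=1$ gives $g(x+y)f_P(x+y)=1$, so $g(x+y)=f_P(x+y)^{-1}$; combining this with $f_P(x+y)=f_P(x)f_P(y)$ and commutativity yields $g(x+y)=f_P(x)^{-1}f_P(y)^{-1}=g(x)g(y)$. Thus $g$ satisfies (i) and (ii), so $g\in\frak{BD}^\ast$, completing the proof. The main obstacle is precisely confirming that the ring-theoretic inverse remains \emph{of the form $f_Q$ with $Q$ of binomial type}; the homomorphism argument is what makes this transparent, reducing it to the elementary fact that a ring map carries inverses to inverses.
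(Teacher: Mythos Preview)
Your proof is correct and follows essentially the same approach as the paper: exhibit the identity as the generating function of $\{1,0,0,\dots\}$, verify closure using commutativity of $F[x]\{\{\frak D\}\}$ together with Proposition \ref{b8}, and construct the inverse of $f_P(x)=1+\hat f_P(x)$ via the geometric series, which converges in the $M_i$-adic topology and has polynomial coefficients. The one place you go beyond the paper is that you explicitly check the inverse again satisfies the functional equation $g(x+y)=g(x)g(y)$, using that substitution $x\mapsto x+y$ is a ring homomorphism carrying inverses to inverses; the paper leaves this point implicit.
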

\begin{proof} Let $W=\{w_i(x)\}$ where $w_0(x)=1$ and the rest vanish. Clearly
$W\in \frak B^\ast$ and
$f_W(x)=1\in F[x]\{\{\frak D\}\}$.
Now let $P, H$ be two members of $\frak B^\ast$ and let $g(x):=f_P(x)f_H(x)$; 
notice that 
clearly the coefficients of $g$ are polynomials in $x$. Note further that 
the commutativity of
$F[x]\{\{\frak D\}\}$ immediately implies that $g(x+y)=g(x)g(y)$, which shows that
$\frak {BD}^\ast$ is closed under multiplication. 

It remains to show that every element $f_P(x)$ in $\frak {BD}^\ast$ is invertible. 
As $f_0(x)\equiv 1$; we can write $f_P(x)=1+\hat{f}_P(x)$. Expanding 
$1/(1+\hat{f}_P(x))$ by the geometric series, which converges in the topology
of $F\{\{\frak D\}\}$, gives an element with polynomial 
coefficients inverse to $f_P(x)$ \end{proof}
\begin{rems}\label{b11}
The classical theory of polynomials of binomial type has applications in fields as
diverse as combinatorics and even quantum field theory (see, for instance, \cite{ki1}).
\end{rems}
\section{The Theory in finite Characteristic and Lucas' Theorem}\label{luc}
For the rest of this paper we assume that $F$ has characteristic $p>0$.
\subsection{Lucas' Theorem}\label{lucas}
Let $q=p^\lambda$ where $\lambda\geq 1$. Let $m=\sum m_iq^i$ and $n=\sum n_jq^j$ be
two integers written $q$-adically. Suppose $m\geq n$.
\begin{theorem}\label{c1}
{\rm ({\bf Lucas})} We have
\begin{equation}\label{c2}
{m\choose n}\equiv \prod_i {m_i\choose n_i}\pmod{p}\,.
\end{equation}
\end{theorem}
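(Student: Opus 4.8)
The plan is to prove Lucas' theorem by encoding the base-$q$ digit structure into a polynomial identity over $\Fp$ and reading off coefficients. The central observation is the Frobenius collapse: for any indeterminate $T$ and any exponent $q=p^\lambda$, one has $(1+T)^q \equiv 1+T^q \pmod p$, since all intermediate binomial coefficients $\binom{q}{k}$ for $0<k<q$ are divisible by $p$. Iterating this gives $(1+T)^{q^i}\equiv 1+T^{q^i}\pmod p$ for every $i\geq 0$. This is the engine that converts a base-$q$ digit expansion into a product.

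First I would write $m=\sum_i m_i q^i$ with $0\le m_i<q$ and compute $(1+T)^m$ modulo $p$ by splitting the exponent along its $q$-adic digits:
\begin{equation}\label{lucasplan1}
(1+T)^m = \prod_i (1+T)^{m_i q^i} \equiv \prod_i \bigl(1+T^{q^i}\bigr)^{m_i} \pmod p\,.
\end{equation}
Next I would expand each factor by the ordinary (integer) binomial theorem, $(1+T^{q^i})^{m_i}=\sum_{n_i=0}^{m_i}\binom{m_i}{n_i}T^{n_i q^i}$, and multiply the factors together. The coefficient of a given monomial $T^n$ in the resulting product is $\sum \prod_i \binom{m_i}{n_i}$, where the sum runs over all tuples $(n_i)$ with $\sum_i n_i q^i = n$.

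The decisive step is uniqueness of the base-$q$ representation: because each $n_i$ appearing above satisfies $0\le n_i\le m_i<q$, the tuple $(n_i)$ is forced to be exactly the $q$-adic digit sequence of $n$, so there is a single surviving term and the coefficient of $T^n$ is precisely $\prod_i \binom{m_i}{n_i}$. On the other hand, expanding $(1+T)^m$ directly by the integer binomial theorem shows the coefficient of $T^n$ is $\binom{m}{n}$. Comparing the two computations of the same coefficient modulo $p$ yields \eqref{c2}. I expect the main obstacle to be the bookkeeping in this uniqueness step: one must argue that any monomial $T^n$ with $n\le m$ arises from a unique admissible digit tuple (and that digits $n_i>m_i$ simply contribute nothing, so the convention $\binom{m_i}{n_i}=0$ for $n_i>m_i$ is automatically respected), which is where the hypothesis $m\ge n$ and the digit bound $n_i<q$ are genuinely used. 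Everything else is a routine application of the Frobenius congruence and coefficient comparison.
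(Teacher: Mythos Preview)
Your argument is correct and is essentially the same as the paper's: both compute the $m$-th power of a binomial two ways---once directly via the Binomial Theorem and once by first splitting $m$ along its $q$-adic digits and applying Frobenius---then identify coefficients using uniqueness of the $q$-adic expansion. The only cosmetic difference is that the paper uses the two-variable form $(x+y)^m$ in characteristic $p$ while you use the one-variable specialization $(1+T)^m$ modulo $p$.
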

\begin{proof}
By definition we have 
\begin{equation}\label{c2.1}
(x+y)^m=\sum_{e=0}^m{m \choose e}x^ey^{m-e}\,.\end{equation}
But in
characteristic $p$ we also have 
\begin{equation}\label{c3}
(x+y)^m=\prod_i (x^{q^i}+y^{q^i})^{m_i}\,.
\end{equation}
The result follows upon completing the multiplication, 
equating \ref{c2.1} and \ref{c3}, and noting the uniqueness of
the $q$-adic expansions.\end{proof}
\subsection{First Results in Finite Characteristic}\label{first}
Let $f(x)\in F[x]$. Recall that $f(x)$ is {\it additive} if and only if $f(x+y)=f(x)+f(y)$ for all
$x$ and $y$ in the algebraic closure of $F$. 
It is simple to see that this happens if and only if $f(x)$ only contains monomials of
the form $cx^{p^j}$. 

Our first result gives a basic connection between elements $P\in \frak B$ and 
additive polynomials.

\begin{prop}\label{c4}
Let $P=\{p_i(x)\}\in \frak B$. Then $p_{p^j}(x)$ is additive for all $j\geq 0$. 
\end{prop}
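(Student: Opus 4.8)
The plan is to show that the polynomial $p_{p^j}(x)$ satisfies the functional equation $p_{p^j}(x+y)=p_{p^j}(x)+p_{p^j}(y)$, since we already know (by the remark before Proposition \ref{c4}) that additivity is equivalent to this identity. The natural place to start is the defining Binomial Theorem relation applied with $n=p^j$:
\begin{equation*}
p_{p^j}(x+y)=\sum_{i=0}^{p^j}\binom{p^j}{i}p_i(x)\,p_{p^j-i}(y)\,.
\end{equation*}
My strategy is to analyze which binomial coefficients $\binom{p^j}{i}$ survive reduction modulo $p$, and to show that only the two endpoint terms $i=0$ and $i=p^j$ contribute.

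The key computational input is Lucas' Theorem (Theorem \ref{c1}), applied with the prime $p$ itself playing the role of $q$ (so $\lambda=1$). The base-$p$ expansion of $m=p^j$ is simply a single digit $1$ in position $j$ and zeros elsewhere. Lucas' congruence then says that $\binom{p^j}{i}$ is nonzero mod $p$ precisely when every base-$p$ digit $i_t$ of $i$ satisfies $\binom{m_t}{i_t}\not\equiv 0$; since all digits $m_t$ of $p^j$ are $0$ except $m_j=1$, this forces $i_t=0$ for $t\neq j$ and $i_j\in\{0,1\}$. Hence the only indices $i$ with $\binom{p^j}{i}\not\equiv 0 \pmod p$ are $i=0$ and $i=p^j$, and in both cases the coefficient equals $1$. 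First I would record this as the central lemma of the argument.

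Carrying out the reduction, and using that all the intermediate coefficients vanish in characteristic $p$, the sum collapses to
\begin{equation*}
p_{p^j}(x+y)=p_0(x)\,p_{p^j}(y)+p_{p^j}(x)\,p_0(y)\,.
\end{equation*}
Now I would invoke Proposition \ref{b2}, which tells us that for a nontrivial sequence $p_0\equiv 1$; substituting this gives exactly $p_{p^j}(x+y)=p_{p^j}(x)+p_{p^j}(y)$, the desired additivity. (One should also dispose of the trivial sequence $P_0$ separately, where every $p_i\equiv 0$ and additivity holds vacuously, so that the statement as phrased for all $P\in\frak B$ is covered.)

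I do not expect a serious obstacle here: the argument is essentially a direct application of Lucas' Theorem to a single-digit $m$, followed by the normalization $p_0\equiv 1$. The one point that warrants care is making sure the vanishing of $\binom{p^j}{i}$ for $0<i<p^j$ is genuinely a statement \emph{in characteristic $p$} (i.e.\ that these coefficients are zero in $F$, not merely divisible by $p$ in $\Z$); this is immediate since $F$ has characteristic $p$, but it is the hinge on which the whole collapse of the sum depends.
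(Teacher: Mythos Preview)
Your proof is correct and follows the same approach as the paper's: expand $p_{p^j}(x+y)$ via the defining binomial relation, kill the intermediate terms by the vanishing of $\binom{p^j}{i}$ modulo $p$, and conclude additivity. The paper compresses all of this into a single sentence, whereas you spell out the Lucas justification, the use of $p_0\equiv 1$ from Proposition~\ref{b2}, and the trivial-sequence case explicitly; these are exactly the details implicit in the paper's one-line argument.
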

\begin{proof}
In the Binomial Theorem all the lower terms are congruent to $0$ mod $p$ so that
the result follows immediately from the definition of $P$.
\end{proof}

\begin{prop}\label{c5}
Let $g(x)=1+\{{\rm higher~terms}\}\in F[x]\{\{D\}\}$. Then $g(x)^p\equiv 1$.
\end{prop}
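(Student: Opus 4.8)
The plan is to treat this as a purely ring-theoretic statement about the commutative characteristic-$p$ ring $R:=F[x]\{\{\frak D\}\}$, reducing everything to the computation of $\frak D_i^{\,p}$. First I would write $g=1+\hat g$, where $\hat g=\sum_{i\ge 1}a_i(x)\frak D_i\in M_1$ collects the higher terms. Since $R$ is commutative of characteristic $p$, the ordinary binomial theorem applied to the two-term sum $1+\hat g$ gives $g^p=\sum_{k=0}^p\binom pk\hat g^{\,k}=1+\hat g^{\,p}$, because every intermediate coefficient $\binom pk$ with $0<k<p$ is an integer divisible by $p$ and hence vanishes in $F$. Thus the proposition is equivalent to the assertion $\hat g^{\,p}=0$, and for this it suffices to prove that $\frak D_i^{\,p}=0$ in $R$ for every $i\ge 1$.

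The key computation is the value of $\frak D_i^{\,p}$. Iterating the defining relation \eqref{b3} telescopes into $\frak D_i^{\,p}=\Big(\prod_{k=2}^{p}\binom{ki}{i}\Big)\frak D_{pi}$, so that the coefficient is the multinomial number $(pi)!/(i!)^p$. I would then isolate the single factor $\binom{pi}{i}$ (the $k=p$ term) and apply Lucas' Theorem \ref{c1} with $q=p$: if $i=\sum_j i_jp^j$, then the base-$p$ digits of $pi$ are those of $i$ shifted up by one place, so the digit of $pi$ in position $0$ is $0$. Hence $\binom{pi}{i}\equiv\binom{0}{i_0}\prod_{j\ge 1}\binom{i_{j-1}}{i_j}\pmod p$, and the factor $\binom{0}{i_0}$ forces the product to vanish unless $i_0=0$; an immediate induction on the digits then shows $\binom{pi}{i}\equiv 0\pmod p$ whenever $i\ge 1$. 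Since this factor already occurs in the product above, $\frak D_i^{\,p}=0$ for all $i\ge 1$.

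It remains to pass from ``each $\frak D_i^{\,p}=0$'' to ``$\hat g^{\,p}=0$'', and this is where the only genuine care is needed once the series is infinite. The $p$-power (Frobenius) map on $R$ is additive, again by the characteristic-$p$ binomial theorem, and it preserves the filtration, sending $M_N$ into $M_{pN}\subseteq M_N$; hence it is continuous for the topology of Definition \ref{b5}. Continuity lets me distribute the $p$-power over the convergent series: $\hat g^{\,p}=\big(\sum_{i\ge1}a_i\frak D_i\big)^p=\sum_{i\ge1}a_i^{\,p}\,\frak D_i^{\,p}=0$, and combining this with $g^p=1+\hat g^{\,p}$ from the first paragraph yields $g^p=1$. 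The step I expect to be the main obstacle is the vanishing $\frak D_i^{\,p}=0$, which is exactly the point where Lucas' Theorem enters; the additivity and continuity of Frobenius are routine given completeness, but they must not be skipped, since term-by-term distribution over the infinite series is not automatic without them.
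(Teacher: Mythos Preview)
Your proof is correct and follows essentially the same route as the paper, which simply says ``This is an immediate consequence of Lucas' Theorem''; you have unpacked that one-liner by making explicit the reduction to $\frak D_i^{\,p}=0$ via Lucas and by justifying the passage to the infinite sum through the continuity of Frobenius, a point the paper leaves implicit.
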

\begin{proof}
This is an immediate consequence of Lucas' Theorem \ref{c1}. \end{proof}
\begin{cor}\label{c6}
The abelian group $\frak{BD}^\ast$ is naturally an $\Fp$-vector space.\end{cor}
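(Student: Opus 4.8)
The plan is to show that the abelian group $\frak{BD}^\ast$, already known to be a group under multiplication by Theorem \ref{b10}, carries a scalar multiplication by $\Fp$ compatible with its group law, thereby upgrading it to an $\Fp$-vector space. Writing the group operation multiplicatively, the crucial point is that every element is annihilated by $p$; that is, $g^p = 1$ for every $g \in \frak{BD}^\ast$. By Proposition \ref{b2} and the remark following it, every such $g=f_P(x)$ has the form $1 + \{\text{higher terms in } \frak D\}$, so Proposition \ref{c5} applies directly and yields $g^p \equiv 1$.

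First I would observe that an abelian group $G$ in which $p \cdot x = 0$ (additively) for all $x$ is precisely the same data as an $\Fp$-vector space: the $\Z$-module structure that every abelian group carries factors through $\Z/p\Z = \Fp$ exactly when $p$ annihilates $G$. Concretely, for a scalar $c \in \Fp$, represented by an integer $n$ with $0 \le n < p$, one defines $c \cdot g := g^{n}$ (in multiplicative notation), and the condition $g^p = 1$ guarantees that this is independent of the chosen integer representative $n$, so the action is well-defined on residue classes mod $p$.

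The verification of the vector space axioms is then routine and I would not belabor it: $(c+c')\cdot g = c\cdot g \,\cdot\, c'\cdot g$ follows from the law of exponents once one checks it respects reduction mod $p$ (again using $g^p=1$); $c\cdot(g\cdot h) = (c\cdot g)(c\cdot h)$ follows from commutativity of $\frak{BD}^\ast$ established in Theorem \ref{b10}; and $(cc')\cdot g = c\cdot(c'\cdot g)$ and $1\cdot g = g$ are immediate. The word ``naturally'' in the statement is accounted for by the fact that this $\Fp$-structure is the unique one compatible with the underlying abelian group, requiring no extra choices.

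I do not anticipate a genuine obstacle here: the entire content of the corollary is the exponent-$p$ property, and that has already been isolated as Proposition \ref{c5}, itself a direct consequence of Lucas' Theorem \ref{c1}. If anything, the only point deserving a moment's care is confirming that Proposition \ref{c5} genuinely applies to \emph{every} element of $\frak{BD}^\ast$ and not merely to a generic one --- but this is guaranteed precisely because membership in $\frak{BD}^\ast$ forces the leading term to be $1$ via Proposition \ref{b2}. Thus the proof reduces to citing Proposition \ref{c5} to obtain $g^p=1$ and then invoking the standard equivalence between exponent-$p$ abelian groups and $\Fp$-vector spaces.
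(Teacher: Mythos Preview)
Your proposal is correct and follows exactly the intended route: the paper states this as an immediate corollary of Proposition~\ref{c5}, with no further argument given. You have simply spelled out the standard fact that an abelian group of exponent $p$ is canonically an $\Fp$-vector space, which is precisely what the paper leaves implicit.
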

\begin{rems}\label{c6.1}
Let $f_P(x)\in \frak{BD}^\ast$. By Proposition \ref{b8} we have
$$f_P(px)=f_P(0)=1=f_P(x)^p\,,$$
giving another proof of the proposition in this case.
\end{rems}

\subsection{The Carlitz Construction}\label{carlitz}
Let $q=p^\lambda$ as before and assume that $\Fq\subseteq F$. Carlitz turned Lucas' 
Theorem around to construct certain sequences (Definition \ref{c12} below)
satisfying the Binomial Theorem which we generalize in this subsection.

Let $E:=\{e_j(x)\}_{j=0}^\infty$ be a sequence of $\Fq$-linear polynomials (so each 
$e_j(x)$ is a finite linear combination of monomials of the form $x^{q^n}$). 
Let $\frak{L}_q$ be the set of such sequences; note that $\frak{L}_p$ is obviously
an $F$-vector space.
Let $i$ be a nonnegative integer written $q$-adically as $\sum_{t=0}^m i_tq^t$.
\begin{defn}\label{c7}
Let $E\in \frak{L}_q$ as above. We set 
\begin{equation}\label{c8}
p_{_{E,i}}(x):=\prod_t e_t(x)^{i_t}\,.
\end{equation}
\end{defn}
Let $P_E$ be the sequence $\{p_{_{E,i}}(x)\}$. 
The next proposition is then basic for us.
\begin{prop}\label{c9}
The sequence $P_E\in \frak{BD}^\ast$.
\end{prop}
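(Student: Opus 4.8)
The plan is to verify directly that $P_E$ satisfies the Binomial Theorem of Definition \ref{b1} and is nontrivial, so that its generating function $f_{P_E}(x)$ lands in $\frak{BD}^\ast$; the statement $P_E\in\frak{BD}^\ast$ is read through the identification $\frak{BD}^\ast=\{f_P\}_{P\in\frak B^\ast}$. Nontriviality is immediate: when $i=0$ all $q$-adic digits $i_t$ vanish, so the empty product gives $p_{_{E,0}}(x)=1$, and $P_E\in\frak B^\ast$ exactly as in Proposition \ref{b2}. The substance is the Binomial identity, and the engine will be the additivity of the $e_t$ together with Lucas' Theorem \ref{c1}.

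For the main computation, fix $n$ with $q$-adic expansion $n=\sum_t n_tq^t$. Since each $e_t$ is $\Fq$-linear, hence additive, I would first write
$$p_{_{E,n}}(x+y)=\prod_t e_t(x+y)^{n_t}=\prod_t\bigl(e_t(x)+e_t(y)\bigr)^{n_t}\,.$$
Expanding each factor by the ordinary binomial theorem and multiplying out, the product becomes a sum over digit-tuples $(a_t)$ with $0\le a_t\le n_t$ of terms $\prod_t{n_t\choose a_t}e_t(x)^{a_t}e_t(y)^{n_t-a_t}$.

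The key bookkeeping step is to recognize each such tuple as a pair of compatible $q$-adic expansions. Because $0\le a_t\le n_t\le q-1$, setting $i:=\sum_t a_tq^t$ makes $a_t$ exactly the $t$-th $q$-adic digit of $i$; moreover the inequalities $a_t\le n_t$ force the subtraction $n-i$ to proceed with no borrowing, so $n_t-a_t$ is precisely the $t$-th digit of $n-i$. Hence $\prod_t e_t(x)^{a_t}=p_{_{E,i}}(x)$ and $\prod_t e_t(y)^{n_t-a_t}=p_{_{E,n-i}}(y)$, and the expansion reads
$$p_{_{E,n}}(x+y)=\sum_i\Bigl(\prod_t{n_t\choose i_t}\Bigr)\,p_{_{E,i}}(x)\,p_{_{E,n-i}}(y)\,,$$
the sum ranging over those $0\le i\le n$ whose digits satisfy $i_t\le n_t$ for all $t$.

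Finally I would invoke Lucas' Theorem \ref{c1} to rewrite the coefficient as $\prod_t{n_t\choose i_t}\equiv{n\choose i}\pmod p$. For the indices excluded above (some digit $i_t>n_t$), Lucas equally gives ${n\choose i}\equiv 0$, since the factor ${n_t\choose i_t}$ then vanishes; hence adjoining those terms changes nothing in characteristic $p$, extending the summation to all $0\le i\le n$ and yielding exactly $p_{_{E,n}}(x+y)=\sum_{i=0}^n{n\choose i}p_{_{E,i}}(x)p_{_{E,n-i}}(y)$. I expect the only genuine subtlety to be the no-borrow matching of the tuple $(a_t)$ with the digits of $i$ and $n-i$; once that identification is secured, Lucas' Theorem does the remaining work, serving double duty by both evaluating the surviving coefficients and annihilating precisely the terms that the naive product expansion omits.
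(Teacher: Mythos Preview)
Your proof is correct and follows exactly the approach of the paper: use the additivity of each $e_t$ to write $p_{_{E,n}}(x+y)=\prod_t(e_t(x)+e_t(y))^{n_t}$, expand, and invoke Lucas' Theorem \ref{c1}. Your write-up simply makes explicit the digit-bookkeeping (the no-borrow identification of $(a_t)$ with the $q$-adic digits of $i$ and $n-i$, and the vanishing of the excluded coefficients) that the paper leaves implicit in the phrase ``multiplying out and using Lucas' Theorem.''
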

\begin{proof}
We need to compute $p_{_{E,i}}(x+y)=\prod_t e_t(x+y)^{i_t}=\prod_t (e_t(x)+e_t(y))^{i_t}$.
The result now precisely follows upon multiplying out and 
using Lucas' Theorem \ref{c1}.\end{proof}
 
Proposition \ref{c9} is our first construction in characteristic $p$ of polynomials 
satisfying the Binomial Theorem. 

\begin{example}\label{c10}
For $i\geq 0$ let $\ell_q(i)$ be the sum of its $q$-adic digits. Then the sequence
$\{x^{\ell_q(i)}\}$ of elements in $F[x]$ satisfies the Binomial Theorem. Indeed, this is just 
Proposition \ref{c9} where all $e_t(x)=x$.\end{example}
\subsection{The Connection with Nonarchimedean Measure Theory}\label{conn}
Let $A:=\Fq[\theta]$ and, for $t\geq 0$ an integer, let $A(t):=\{a\in A \mid \deg(a)<t\}$;
notice that $A(t)$ is an $\Fq$-vector space of dimension $t$. In this subsection
$F$ will be a field containing $A$.
\begin{defn}\label{c10}
Let $t\geq 0$ be an integer. We let $D_t$ be the product of all monic elements of
degree $t$ and  
\begin{equation}\label{c11}
e_t(x):=\prod_{a \in A(t)}(x-a)\,.
\end{equation}\end{defn}
It is easy to see that $e_t(x)$ is additive (indeed, $\Fq$-linear). We set
$E:=\{e_t(x)/D_t\}_{t=0}^\infty$, and now let $i=\sum_{t=0}^m i_tq^t$ be written $q$-adically as above. 
\begin{defn}\label{c12}
We define the {\it Carlitz polynomials} $P_E=\{p_{_{E,i}}(x)\}$ by
$$
p_{_{E,i}}(x):=\prod_t\left(\frac{e_t(x)}{D_t}\right)^{i_t}\,,
$$ where $e_t(x)$ is given in Equation \ref{c11}.\end{defn}

It is traditional to set $G:=P_E$ and $G_i(x):=p_{_{E,i}}(x)$.
Carlitz has shown that $G_i(a)\in A$ for all $a\in A$ and by Proposition \ref{c9}
we know that $G$ satisfies the Binomial Theorem.
For a nontrivial prime $\frak v$ of $A$ we let
$A_\frak v$ be the completion of $A$ at $\mathfrak v$. 
C.\ Wagner \cite{wa1} has shown that $G$ forms an orthonormal  Banach basis for
the space of
continuous functions from $A_\frak v$ to itself, see also \cite{co1}.
Using the fact that $G$ satisfies the
Binomial Theorem, the author, and Greg Anderson, noticed that this implies that the
convolution algebra of Nonarchimedean measures on $A_\frak v$ with values in $A_\frak v$ is then
isomorphic to $A_\frak v\{\{\frak D\}\}$; see \cite{go1}.

\begin{example}\label{c14}
Let $\alpha\in A_\frak v$. The {\it Dirac measure} at $\alpha$, $\delta_\alpha$, is defined
by $\int_{A_\frak v} f(x)\, d\delta_\alpha(x)=f(\alpha)$ for all continuous $f(x)$ 
(note that in non-Archimedean analysis this construction gives a true, bounded, measure).
We know that  $f(x)$ can be expressed as $\sum c_i G_i(x)$ for a unique sequence
$\{c_i\}$ of scalars which tends to $0$ as $i\to \infty$. Thus,
we have $\delta_\alpha$ corresponds to $\sum_i G_i(\alpha)\frak D_i\in
A_\frak v\{\{\frak D\}\}$. Moreover,
by definition, the convolution of $\delta_\alpha$ and $\delta_\beta$ is
$\delta_{(\alpha+\beta)}$, and this {\it precisely} corresponds to Proposition \ref{b8}
in the description of measures as elements of $A_\frak v \{\{\frak D\}\}$.
\end{example}

\subsection{The Carlitz 
Construction is closed under Multiplication of Generating Functions}
\label{closed} 
We now return to having $F$ be an arbitrary field with $\Fq\subset F$.
By Corollary \ref{c6}, the group $\frak{BD}^\ast$ is an $\Fp$-vector space.

\begin{defn}\label{14.1}
We let
$\frak{BD}^\ast_{q,0}\subseteq \frak{BD}^\ast$ be the subset of those generating functions
that arise from 
$\frak L_q$ by the Carlitz construction Proposition \ref{c9}.\end{defn}
 
Let $W=\{w_t\}$ and $V=\{v_t\}$ be two elements of $\frak L_p$
with sum $W+V$.  Let $P_W$, $P_V$
and $P_{_{(W+V)}}$ be the corresponding sequences of polynomials satisfying the
Binomial Theorem with $f_{P_W}(x)$, $f_{P_V}(x)$ and $f_{P_{(W+V)}}(x)$ the corresponding
generating functions.

\begin{theorem}\label{c15}
In $F[x]\{\{D\}\}$ we have
$$f_{P_W}(x)\cdot f_{P_V}(x)=f_{P_{(W+V)}}(x)\,.$$
\end{theorem}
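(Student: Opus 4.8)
The plan is to prove the equality by comparing, for each $n$, the coefficients of the divided power symbol $\frak D_n$ on the two sides. First I would expand the left-hand product using the multiplication rule \eqref{b3}: writing $f_{P_W}(x)=\sum_i p_{_{W,i}}(x)\,\frak D_i$ and $f_{P_V}(x)=\sum_j p_{_{V,j}}(x)\,\frak D_j$, and using $\frak D_i\frak D_j={i+j\choose i}\frak D_{i+j}$, one gets
$$f_{P_W}(x)\cdot f_{P_V}(x)=\sum_n\left(\sum_{i+j=n}{n\choose i}\,p_{_{W,i}}(x)\,p_{_{V,j}}(x)\right)\frak D_n\,.$$
Since $f_{P_{(W+V)}}(x)=\sum_n p_{_{(W+V),n}}(x)\,\frak D_n$, the theorem is equivalent to the family of polynomial identities
$$\sum_{i+j=n}{n\choose i}\,p_{_{W,i}}(x)\,p_{_{V,j}}(x)=p_{_{(W+V),n}}(x)=\prod_t\bigl(w_t(x)+v_t(x)\bigr)^{n_t}\,,$$
one for each $n=\sum_t n_tq^t$ written $q$-adically, where the last equality is just the Carlitz definition \eqref{c8} applied to $E=W+V$.

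Next I would unwind \eqref{c8} on the right. Because each digit satisfies $0\le n_t<q$, the ordinary Binomial Theorem applies digit by digit, giving
$$\prod_t(w_t+v_t)^{n_t}=\sum_{\{a_t\}}\left(\prod_t{n_t\choose a_t}\right)\prod_t w_t^{a_t}\prod_t v_t^{\,n_t-a_t}\,,$$
the sum running over all tuples with $0\le a_t\le n_t$. The key observation is that each such tuple is the tuple of $q$-adic digits of a unique pair $(i,j)$ with $i+j=n$ and \emph{no carries}, namely $i=\sum_t a_tq^t$ and $j=\sum_t(n_t-a_t)q^t$; for this pair one has exactly $p_{_{W,i}}(x)=\prod_t w_t^{a_t}$ and $p_{_{V,j}}(x)=\prod_t v_t^{\,n_t-a_t}$. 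By Lucas' Theorem \ref{c1}, ${n\choose i}\equiv\prod_t{n_t\choose a_t}\pmod p$, so each carry-free term on the left matches the corresponding term on the right.

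It remains to dispose of the pairs $(i,j)$ with $i+j=n$ for which some digit satisfies $i_t>n_t$ — equivalently, those for which adding $i$ and $j$ in base $q$ produces a carry — and I expect this to be the main (though mild) obstacle. The point is that these surplus terms contribute nothing: when $i_t>n_t$ the factor ${n_t\choose i_t}$ vanishes, so ${n\choose i}\equiv 0\pmod p$ by Lucas' Theorem \ref{c1}, and since $F$ has characteristic $p$ the entire term drops out. Conversely, whenever ${n\choose i}\not\equiv 0$ one has $i_t\le n_t$ for every $t$, which forces $j=n-i$ to have $q$-adic digits precisely $n_t-i_t$, so no spurious terms are generated on the left either. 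Matching the two expansions term by term then yields the displayed identity for every $n$, and hence the theorem.

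Finally, I would note that the same use of Lucas' Theorem lets one package the computation more conceptually. Applying \eqref{b3} repeatedly and using that ${\,a_0+a_1q+\cdots+a_sq^s\choose a_0+a_1q+\cdots+a_{s-1}q^{s-1}}\equiv 1\pmod p$, each generating function factors as $f_{P_E}(x)=\prod_t\bigl(\sum_{a=0}^{q-1}e_t(x)^a\,\frak D_{aq^t}\bigr)$. The theorem then reduces to the single-index identity $\bigl(\sum_a w_t^a\,\frak D_{aq^t}\bigr)\bigl(\sum_b v_t^b\,\frak D_{bq^t}\bigr)=\sum_{c=0}^{q-1}(w_t+v_t)^c\,\frak D_{cq^t}$ for each $t$, the would-be terms with $c\ge q$ again vanishing by Lucas. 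Since $F[x]\{\{\frak D\}\}$ is commutative, multiplying these factorizations over all $t$ gives $f_{P_W}(x)\,f_{P_V}(x)=f_{P_{(W+V)}}(x)$ at once.
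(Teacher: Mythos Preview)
Your proof is correct and follows essentially the same approach as the paper: expand $p_{_{(W+V),n}}(x)=\prod_t(w_t+v_t)^{n_t}$ digit by digit and invoke Lucas' Theorem to match the result with the coefficient of $\frak D_n$ in the product $f_{P_W}(x)f_{P_V}(x)$. You have simply spelled out in full the details the paper compresses into ``expands out and uses Lucas as before,'' and your final product-factorization viewpoint is a pleasant repackaging of the same idea.
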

\begin{proof}
Let $i=\sum_t i_tq^t$ be written $q$-adically as before. By definition
$$p_{_{(W+V),i}}(x)=\prod_t(w_t(x)+v_t(x))^{i_t}\,.$$
One now expands out and uses Lucas as before. The result is exactly the element
obtained by multiplying $f_{P_W}(x)$ and $f_{P_V}(x)$ and the result follows.
\end{proof}
\begin{cor}\label{c16}
$\frak{BD}^\ast_{q,0}\subseteq \frak{BD}^\ast$ is an $\Fp$-subspace.
\end{cor}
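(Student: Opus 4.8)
The plan is to recognize $\frak{BD}^\ast_{q,0}$ as the image of a group homomorphism and then to exploit the fact, recorded in Corollary \ref{c6}, that the ambient group $\frak{BD}^\ast$ is an abelian group of exponent $p$. Concretely, I would introduce the map $\phi\colon \frak{L}_q\to \frak{BD}^\ast$ sending a sequence $E$ to the generating function $f_{P_E}(x)$ of its associated Carlitz sequence. By Proposition \ref{c9} this is well defined, and by Definition \ref{14.1} its image is exactly $\frak{BD}^\ast_{q,0}$. It therefore suffices to show that $\phi$ is a homomorphism from the additive group $(\frak{L}_q,+)$ onto its image inside the multiplicative group $\frak{BD}^\ast$, since the image of a group homomorphism is a subgroup.

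The homomorphism property is precisely the content of Theorem \ref{c15}: for $W,V\in \frak{L}_q$ one has $f_{P_W}(x)\cdot f_{P_V}(x)=f_{P_{(W+V)}}(x)$, that is $\phi(W)\phi(V)=\phi(W+V)$. Here I would first note that $\frak{L}_q$ is itself closed under addition, being an $F$-vector space with the sum of two $\Fq$-linear polynomials again $\Fq$-linear, so that $W+V$ genuinely lies in $\frak{L}_q$ and $\phi(W+V)$ is again in $\frak{BD}^\ast_{q,0}$. For the identity I would check that the zero sequence $E=\{0\}$ maps to $1\in F[x]\{\{\frak D\}\}$: every $p_{_{E,i}}(x)$ with $i\geq 1$ contains a factor $e_t(x)^{i_t}=0$ and hence vanishes, while $p_{_{E,0}}(x)=1$, so $f_{P_E}(x)=1$. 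Thus $\phi$ is a homomorphism and $\frak{BD}^\ast_{q,0}$ is a subgroup of $\frak{BD}^\ast$.

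It remains to upgrade the conclusion from subgroup to $\Fp$-subspace, and this step is automatic rather than the crux. By Proposition \ref{c5} every element $g$ of $\frak{BD}^\ast$ satisfies $g^p=1$, so the $\Fp$-vector space structure of Corollary \ref{c6} is the unique one carried by an abelian group of exponent $p$; scalar multiplication by $n\in\Fp$ is simply the $n$-fold product $g\mapsto g^n$, and inverses are $(p-1)$-st powers. Since $\phi(E)^n=\phi(nE)$ with $nE\in\frak{L}_q$, the subgroup $\frak{BD}^\ast_{q,0}$ is closed under these operations as well, hence is an $\Fp$-subspace. The entire force of the argument is thus housed in Theorem \ref{c15}; the only point to watch is that the multiplicative group operation on $\frak{BD}^\ast$ really is the addition underlying its $\Fp$-structure, which the exponent-$p$ property guarantees, so there is no genuine obstacle beyond invoking the previous theorem.
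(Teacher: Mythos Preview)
Your argument is correct and follows essentially the same route as the paper: both recognize $\frak{BD}^\ast_{q,0}$ as the image of the map $E\mapsto f_{P_E}$ from $\frak L_q$, with Theorem \ref{c15} supplying the homomorphism property, and then invoke the exponent-$p$ structure to pass from subgroup to $\Fp$-subspace. The paper's one-line proof additionally asserts injectivity of this map, which you omit; injectivity is not needed for the subspace conclusion, so your version is in fact slightly cleaner on this point.
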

\begin{proof} 
The space $\frak{BD}^\ast_{q,0}$ is the image of $\frak L_p$ under
the linear injection $E\mapsto P_W$.
\end{proof}
\begin{cor}\label{c17}
Let $E\in \frak L_q$ and let $-E$ be its inverse obtained by negating all its elements. Then
$$f_{P_E}(x)\cdot f_{P_{-E}}(x)=1\,.$$
\end{cor}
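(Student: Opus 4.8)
The plan is to deduce the identity directly from the multiplicativity established in Theorem \ref{c15}. Since $-E$ is again a sequence of $\Fq$-linear polynomials, both $E$ and $-E$ lie in the space to which Theorem \ref{c15} applies, so I may take $W:=E$ and $V:=-E$ there. This yields
$$f_{P_E}(x)\cdot f_{P_{-E}}(x)=f_{P_{(E+(-E))}}(x)\,.$$

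Next I would evaluate the right-hand side by identifying the sequence $E+(-E)$ as the zero sequence $Z=\{0\}_{t=0}^\infty$. Applying the defining formula for the associated polynomials (Definition \ref{c7}) to $Z$, one finds $p_{_{Z,i}}(x)=\prod_t 0^{i_t}$: for $i=0$ every $q$-adic digit $i_t$ vanishes, so the empty product equals $1$, whereas for each $i>0$ some digit is positive and the product is $0$. Hence $P_Z=\{1,0,0,\dots\}$, and since $\frak D_0$ is the identity operator (Equation \ref{b4}),
$$f_{P_Z}(x)=\sum_{i=0}^\infty p_{_{Z,i}}(x)\,\frak D_i=\frak D_0=1\,.$$
Chaining the two displays gives $f_{P_E}(x)\cdot f_{P_{-E}}(x)=1$, as desired.

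Conceptually this is nothing more than the observation that $E\mapsto f_{P_E}(x)$ is a group homomorphism from the additive group $\frak L_q$ into the multiplicative group $\frak{BD}^\ast$ of Theorem \ref{b10}, which is precisely the content of Theorem \ref{c15}; such a map must carry the additive inverse $-E$ to the multiplicative inverse of $f_{P_E}(x)$. I expect no serious obstacle. The one place demanding care is the bookkeeping of the constant term, where the convention $0^0=1$ is needed so that $f_{P_Z}(x)$ equals $1$ rather than vanishing identically; this is exactly what guarantees that $P_Z$ is nontrivial and that its generating function is the unit of $\frak{BD}^\ast$.
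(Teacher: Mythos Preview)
Your proposal is correct and is exactly the argument the paper intends: Corollary~\ref{c17} is stated without a separate proof because it is the immediate specialization $W=E$, $V=-E$ of Theorem~\ref{c15}, together with the observation that the zero sequence yields the unit generating function. Your explicit bookkeeping for $f_{P_Z}(x)=1$ (including the $0^0=1$ convention) simply spells out what the paper leaves implicit.
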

\begin{example}\label{c18}
Returning to the case of Example \ref{c14}, the measure theoretic version of
Corollary \ref{c17} is precisely the standard fact that the convolution of $\delta_\alpha$
and $\delta_{-\alpha}$ is $\delta_{0}=1$.
\end{example}

\begin{rems}\label{c18.1}
Theorem \ref{c15} implies that all elements of $\frak{BD}_{q,0}^\ast$ may be expressed as 
a, possibly infinite, convergent product over those elements created out of only one 
nonzero additive function.
In particular, we derive such a product decomposition for the Dirac measures
$\delta_\alpha$ as elements
of $A_{\mathfrak v}\{\{\mathfrak D\}\}$.
In fact, when $\alpha$ is not an element of $A$, this
product decomposition has infinitely many terms.
In turn, this product decomposition corresponds measure theoretically to expressing 
$\delta_\alpha$ as an ``infinite convolution,'' a concept which is well-known in 
classical measure theory, see for example \cite{est}.
\end{rems}

\section{A Second Construction}\label{comps}
\subsection{A Counterexample}\label{counter}
Corollary \ref{c16} states that $\frak{BD}_{q,0}^\ast$ is a subspace of $\frak{BD}^\ast$.
In the next example we will produce an example of an element of $\frak{BD}^\ast$ not
lying in $\frak{BD}^\ast_{q,0}$ for any $q$.

\begin{example}\label{c19}
Let $\{f_i(x)\}_{i=1}^\infty$ be a collection of non-trivial additive functions. Let
$f(x)=1+\sum_{i=0}^\infty f_i(x){\frak D}_{p^i-1}$. Then $f(x)\in \frak{BD}^\ast \setminus
\frak{BD}^\ast_{q,0}$. Indeed, $f(x+y)=f(x)f(y)$ by the additivity of the 
$f_i(x)$ and the fact that for $i,j>0$, $\frak D_{p^i-1}\cdot \frak D_{p^j-1}=0$ (due to the
vanishing of the binomial coefficient); thus $f(x)\in \frak{BD}^\ast$
by Proposition \ref{b8}. As the coefficients
of $\frak D_{p^t}$, $t>0$, vanish, it can not be in $\frak{BD}^\ast_{q,0}$ for any $q$.
\end{example}
\begin{rems}\label{c19.1}
It is easy to see that the element $f(x)$ of the above example can be expressed as the
infinite product $\prod_i (1+f_i(x) {\frak D}_{p^i-1})$.
Note also that multiplying $f$ by an element
of $\frak{BD}^\ast_{p,0}$ also can not lie in $\frak{BD}^\ast_{p,0}$ as it is a group.\end{rems}
\subsection{A Second Construction of Elements in $\frak{BD}^\ast$}\label{another}
Example \ref{c19} leads to a very general construction of elements of $\frak{BD}^\ast$ which
is, in some sense, complimentary to the Carlitz construction of Subsection \ref{carlitz}.
\begin{defn}\label{c19.2}
We say a sequence $X:=\{\frak{D}_{i_j}\}_{j=0}^\infty$ of divided elements is a {\it null sequence} if and only if
$\frak{D}_{i_j}\cdot \frak{D}_{i_t}=0$ for {\it all} $j$ and $t$ (where $j=t$ is permitted).
\end{defn}

Now let $E:=\{e_j(x)\}_{j=0}^\infty\in \frak L_p$ be a sequence of additive functions and define
$$f_{_{X,E}}(x)=1+\sum_j e_j(x)\frak{D}_{i_j}\,.$$
\begin{prop}\label{c19.3}
The element $f_{_{X,E}}(x)\in \frak{BD}^\ast$.
\end{prop}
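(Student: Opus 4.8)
The plan is to verify the functional equation $f_{_{X,E}}(x+y)=f_{_{X,E}}(x)f_{_{X,E}}(y)$ and then invoke Proposition \ref{b8}. First I would observe that $f_{_{X,E}}(x)$ has the shape $f_H(x)$ for the sequence $H=\{h_i(x)\}$ determined by $h_0(x)\equiv 1$, $h_{i_j}(x)=e_j(x)$ for each $j$, and $h_i(x)=0$ otherwise; each coefficient is a genuine polynomial, so $f_{_{X,E}}(x)\in F[x]\{\{\frak D\}\}$. Note that the null-sequence hypothesis forces every $i_j\neq 0$, since $\frak D_0\cdot\frak D_0={0\choose 0}\frak D_0=\frak D_0\neq 0$; hence the constant term $1$ is genuinely disjoint from the additive terms, so $h_0\equiv 1$. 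This last point is what places the underlying sequence in $\frak B^\ast$ rather than merely in $\frak B$, which is exactly what is needed to land in $\frak{BD}^\ast$.

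Next I would carry out the two computations and compare. On one side, the additivity of each $e_j$ gives
\[
f_{_{X,E}}(x+y)=1+\sum_j e_j(x+y)\frak D_{i_j}=1+\sum_j\bigl(e_j(x)+e_j(y)\bigr)\frak D_{i_j}.
\]
On the other side, expanding the product in $F[x]\{\{\frak D\}\}$ produces the two linear sums $\sum_j e_j(x)\frak D_{i_j}$ and $\sum_t e_t(y)\frak D_{i_t}$, together with the bilinear sum $\sum_{j,t}e_j(x)e_t(y)\,\frak D_{i_j}\cdot\frak D_{i_t}$. The defining property of a null sequence is precisely that $\frak D_{i_j}\cdot\frak D_{i_t}=0$ for all $j$ and $t$, so this entire bilinear sum vanishes and the two expressions agree. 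By Proposition \ref{b8} the sequence $H$ then lies in $\frak B$, and since $h_0\equiv 1$ it is nontrivial, hence lies in $\frak B^\ast$; therefore $f_{_{X,E}}(x)=f_H(x)\in\frak{BD}^\ast$, which is the claim.

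The only point requiring genuine care — the main, and rather mild, obstacle — is justifying the termwise expansion of the product $f_{_{X,E}}(x)f_{_{X,E}}(y)$ over the possibly infinite index set $\{i_j\}$. This is handled by the topology on $F[x]\{\{\frak D\}\}$ given by the ideals $M_i$, with respect to which multiplication is continuous: the product may be computed by distributing formally and then reading off the coefficient of each $\frak D_n$, a computation that for fixed $n$ involves only finitely many of the relations $\frak D_{i_j}\cdot\frak D_{i_t}={i_j+i_t\choose i_j}\frak D_{i_j+i_t}$ (using here that the $i_j$ are distinct, so each $\frak D_n$ occurs finitely often). Once this is in place the vanishing of the cross terms is immediate, and the whole argument is simply the general form of the computation already seen in Example \ref{c19}, where the single index choice $i_j=p^j-1$ is the motivating special case.
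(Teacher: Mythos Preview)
Your proof is correct and follows exactly the same approach as the paper, which simply says ``This follows exactly as in Example~\ref{c19}.'' You have written out in full the computation that the paper leaves implicit---additivity of the $e_j$ for the linear terms, the null-sequence hypothesis to kill the cross terms, then Proposition~\ref{b8}---and you have also supplied some extra care (the observation that $i_j\neq 0$, the topological justification of the termwise expansion) that the paper omits.
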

\begin{proof} This follows exactly as in Example \ref{c19}.\end{proof}

Note, as before, we readily deduce the convergent product
\begin{equation}\label{19.3.1}
f_{_X,E}(x)=\prod_j(1+e_j(x)\frak D_{i_j})\,.\end{equation}

Let $\pi_X\colon X\to \frak{BD}^\ast$ be defined by $\pi_X(E):=f_{_{X,E}}(x)$. The
next result then follows immediately.

\begin{prop}\label{c19.3.2}
The mapping $\pi_X$ is an injection of $\frak L_p$ into $\frak{BD}^\ast$.
\end{prop}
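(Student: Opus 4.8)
The plan is to verify that $\pi_X$ is well-defined into $\frak{BD}^\ast$ and then to extract injectivity directly from the divided power expansion. Well-definedness requires nothing new: for any $E=\{e_j(x)\}\in \frak L_p$, Proposition \ref{c19.3} already guarantees $f_{_{X,E}}(x)\in \frak{BD}^\ast$, so $\pi_X$ indeed lands in $\frak{BD}^\ast$.

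For injectivity I would argue by comparing coefficients in $F[x]\{\{\frak D\}\}$. Suppose $E=\{e_j(x)\}$ and $E'=\{e'_j(x)\}$ satisfy $\pi_X(E)=\pi_X(E')$, that is,
$$1+\sum_j e_j(x)\frak D_{i_j}=1+\sum_j e'_j(x)\frak D_{i_j}\,.$$
Since distinct divided symbols $\frak D_i$ are linearly independent, two such series coincide if and only if all corresponding coefficients coincide. Provided the indices $i_j$ of the null sequence $X$ are pairwise distinct, the coefficient of $\frak D_{i_j}$ is exactly $e_j(x)$ on the left and $e'_j(x)$ on the right; hence $e_j(x)=e'_j(x)$ for every $j$, so $E=E'$.

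Alternatively, and perhaps more cleanly, I would first observe that $\pi_X$ is a homomorphism from the additive group $\frak L_p$ to the multiplicative group $\frak{BD}^\ast$. Expanding $f_{_{X,E}}(x)\cdot f_{_{X,E'}}(x)$ and invoking the null sequence property $\frak D_{i_j}\cdot\frak D_{i_t}=0$ of Definition \ref{c19.2} annihilates every cross term, leaving $1+\sum_j(e_j(x)+e'_j(x))\frak D_{i_j}=\pi_X(E+E')$. Injectivity then reduces to triviality of the kernel: $\pi_X(E)=1$ forces $\sum_j e_j(x)\frak D_{i_j}=0$, and distinctness of the $i_j$ again yields $e_j(x)\equiv 0$ for all $j$.

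The only genuine obstacle is the distinctness of the indices $i_j$. The null sequence condition itself asserts only that $\frak D_{i_j}\cdot\frak D_{i_t}=0$; were two indices to coincide, their coefficients would add and could no longer be separated, and $\pi_X$ would fail to be injective. I therefore expect the intended reading of Definition \ref{c19.2} to include that the $\frak D_{i_j}$ are distinct (equivalently, the $i_j$ are distinct, and each is $\geq 1$, since ${0\choose 0}=1\not\equiv 0$ already excludes $i_j=0$ from a null sequence). Once distinctness is in hand the coefficient comparison is routine, and the result follows immediately, as the author indicates.
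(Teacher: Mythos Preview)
Your argument is correct and matches the paper's intent: the author simply states that the result ``follows immediately,'' and your coefficient comparison (equivalently, the homomorphism-plus-kernel version) is exactly the immediate verification being alluded to. Your caveat about distinctness of the indices $i_j$ is a fair reading issue rather than a gap in your reasoning; the motivating Example~\ref{c19} uses the distinct indices $p^i-1$, and the paper is tacitly treating the $\frak D_{i_j}$ in a null sequence as distinct divided symbols.
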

\subsection{The Action of the Group $S_{(q)}$}\label{action}
In \cite{go2} we introduced the group $S_{(q)}$ of homeomorphisms of $\Zp$ to itself. 
The construction of the group is very simple; let $\sigma$ be a permutation of the set
$\{0,1,\ldots\}$ and let $y\in \Zp$ be written $q$-adically as $\sum_{i=0}y_iq^i$. One
then defines $\sigma_\ast y=\sum_i y_iq^{\sigma i}$. Note that this action preserves
the positive integers.
Furthermore, we established that, in characteristic $p>0$,
 the induced mapping $\sigma_\ast(\frak D_j):=\frak D_{\sigma_\ast j}$ is an
automorphism of $R\{\{\frak D\}\}$.

\begin{prop}\label{c20}
The automorphism $\sigma_\ast$ stabilizes both $\frak{BD}^\ast$ and 
$\frak{BD}^\ast_{q,0}$.\end{prop}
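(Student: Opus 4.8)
The plan is to establish each stabilization by first checking that $\sigma_\ast$ carries the space into itself and then observing that $\sigma_\ast$ is invertible, with inverse $(\sigma^{-1})_\ast$, which carries the space into itself by the identical argument; together these give a bijection of the space onto itself. Three elementary features of $\sigma_\ast$ will be used repeatedly: it is a ring automorphism of $F[x]\{\{\frak D\}\}$ (and of $F[x,y]\{\{\frak D\}\}$) fixing every coefficient and acting by $\frak D_j\mapsto\frak D_{\sigma_\ast j}$; it fixes $\frak D_0$, since $\sigma_\ast 0=0$; and on a single-digit index it satisfies $\sigma_\ast(aq^t)=aq^{\sigma t}$ for $0\le a<q$.

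For $\frak{BD}^\ast$ I would start from the characterization in Proposition \ref{b8}. Write $g=f_P=\sum_i p_i(x)\frak D_i$ with $P\in\frak B^\ast$; then $g(x+y)=g(x)g(y)$ in $F[x,y]\{\{\frak D\}\}$, and by Proposition \ref{b2} the $\frak D_0$-coefficient $p_0$ is identically $1$. Applying $\sigma_\ast$ term by term yields $\sigma_\ast(g)=\sum_i p_i(x)\frak D_{\sigma_\ast i}$, a series with polynomial coefficients whose $\frak D_0$-coefficient is still $p_0\equiv 1$ because $\sigma_\ast$ fixes $\frak D_0$; in particular $\sigma_\ast(g)$ is nontrivial. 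Since $\sigma_\ast$ fixes coefficients it commutes with the substitution $x\mapsto x+y$; combining this with the hypothesis $g(x+y)=g(x)g(y)$ and the multiplicativity of $\sigma_\ast$ gives $\sigma_\ast(g)(x+y)=\sigma_\ast\big(g(x+y)\big)=\sigma_\ast\big(g(x)g(y)\big)=\sigma_\ast(g)(x)\,\sigma_\ast(g)(y)$. Proposition \ref{b8} then gives $\sigma_\ast(g)\in\frak{BD}^\ast$, so $\sigma_\ast(\frak{BD}^\ast)\subseteq\frak{BD}^\ast$.

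For $\frak{BD}^\ast_{q,0}$ I would use the product decomposition recorded in Remarks \ref{c18.1}. Given $E=\{e_t\}\in\frak L_q$, factor $f_{P_E}(x)=\prod_t\big(\sum_{a=0}^{q-1}e_t(x)^a\frak D_{aq^t}\big)$, the $t$-th factor being the Carlitz element built from the single additive function $e_t$ placed in position $t$. As $\sigma_\ast$ is a continuous ring automorphism it passes through this convergent product, and on the $t$-th factor the relation $\sigma_\ast(aq^t)=aq^{\sigma t}$ converts $\sum_a e_t(x)^a\frak D_{aq^t}$ into $\sum_a e_t(x)^a\frak D_{aq^{\sigma t}}$, the single-function Carlitz element supported in position $\sigma t$. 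Reindexing the product by $s=\sigma t$ identifies $\sigma_\ast(f_{P_E})$ with $f_{P_{E'}}$ for the permuted sequence $E'=\{e'_s\}\in\frak L_q$ determined by $e'_{\sigma t}=e_t$; hence $\sigma_\ast(f_{P_E})\in\frak{BD}^\ast_{q,0}$. Equivalently, and avoiding the product altogether, one may read off the $\frak D_j$-coefficient of $\sigma_\ast(f_{P_E})$ as $p_{E,\sigma_\ast^{-1}j}(x)=\prod_t e_t(x)^{(\sigma_\ast^{-1}j)_t}$ and use the digit identity $(\sigma_\ast^{-1}j)_t=j_{\sigma t}$ to rewrite this as $p_{E',j}(x)$.

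The step I expect to require the most care is passing $\sigma_\ast$ through the infinite product in the second part, i.e. verifying that $\sigma_\ast$ is continuous for the $M_\bullet$-topology. This is true but not quite automatic, since $\sigma_\ast$ can move an individual high-index $\frak D$ to a low-index one; what saves the day is that for each $i$ the set $\{j:\sigma_\ast j<i\}=\sigma_\ast^{-1}\{0,\dots,i-1\}$ is finite, $\sigma_\ast$ being a bijection of the nonnegative integers, so choosing $k$ beyond all its elements gives $\sigma_\ast(M_k)\subseteq M_i$. One should also confirm that the reindexed product $\prod_s f_{P_{E'_s}}$ converges, which it does because $\sigma t\to\infty$. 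The coefficientwise computation given above sidesteps both issues, and I would likely record it as the primary argument for the second assertion.
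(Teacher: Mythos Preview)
Your proposal is correct and follows essentially the same approach as the paper: the first assertion via Proposition~\ref{b8} and the ring-homomorphism property of $\sigma_\ast$, and the second via the observation that $\sigma_\ast$ merely permutes the positions of the additive functions in the Carlitz construction. Your treatment is considerably more detailed than the paper's two-sentence proof---in particular your care with continuity of $\sigma_\ast$ and the explicit digit computation $(\sigma_\ast^{-1}j)_t=j_{\sigma t}$---but the underlying ideas are the same.
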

\begin{proof} Since $\sigma_\ast$ is a ring homomorphism, the first statement follows
from Proposition \ref{b8}. The second statement follows because $\sigma_\ast$ merely
changes the order of the additive functions in the Carlitz construction which is
inessential.\end{proof}
\begin{cor}\label{c21}
Let $f(x)$ be the function of Example \ref{c19} and let $f^\sigma$ be its image under
$\sigma_\ast$. Then $f^\sigma\in \frak{BD}^\ast\setminus\frak{BD}^\ast_{q,0}$.
\end{cor}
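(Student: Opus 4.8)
The plan is to reduce the statement to two facts that are already available: Proposition \ref{c20}, which tells us that $\sigma_\ast$ is an automorphism of $R\{\{\frak D\}\}$ stabilizing both $\frak{BD}^\ast$ and $\frak{BD}^\ast_{q,0}$, and Example \ref{c19}, which exhibits $f$ as an explicit member of $\frak{BD}^\ast\setminus\frak{BD}^\ast_{q,0}$. The first half of the conclusion is then immediate: since $\sigma_\ast$ stabilizes $\frak{BD}^\ast$ and $f\in\frak{BD}^\ast$, we have $f^\sigma=\sigma_\ast(f)\in\frak{BD}^\ast$. All the content lies in the non-membership $f^\sigma\notin\frak{BD}^\ast_{q,0}$.

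For that, the key point is that $\sigma_\ast$ is not merely an endomorphism but an honest automorphism whose inverse is again induced by an element of $S_{(q)}$. First I would record that $S_{(q)}$ is a group and that, by construction, $\sigma\mapsto\sigma_\ast$ is a homomorphism (see \cite{go2}), so that $(\sigma_\ast)^{-1}=(\sigma^{-1})_\ast$ with $\sigma^{-1}\in S_{(q)}$. Applying Proposition \ref{c20} to $\sigma^{-1}$ shows that $(\sigma_\ast)^{-1}$ also stabilizes $\frak{BD}^\ast_{q,0}$; hence $\sigma_\ast$ restricts to a bijection of $\frak{BD}^\ast_{q,0}$ onto itself and therefore permutes the complement $\frak{BD}^\ast\setminus\frak{BD}^\ast_{q,0}$ as well. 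Arguing by contradiction, if $f^\sigma\in\frak{BD}^\ast_{q,0}$ then applying $(\sigma^{-1})_\ast$ forces $f=(\sigma^{-1})_\ast(f^\sigma)\in\frak{BD}^\ast_{q,0}$, contradicting Example \ref{c19}. Thus $f^\sigma\in\frak{BD}^\ast\setminus\frak{BD}^\ast_{q,0}$, as claimed.

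The only step needing care — the main obstacle, such as it is — is guaranteeing that $\sigma_\ast$ restricts to a genuine bijection of $\frak{BD}^\ast_{q,0}$, i.e. that stability holds for $\sigma^{-1}$ and not merely for $\sigma$; this is exactly what the group structure of $S_{(q)}$ supplies, so no new computation is required beyond invoking Proposition \ref{c20} twice. As a sanity check one could instead argue directly in the spirit of Example \ref{c19}: a nontrivial element of $\frak{BD}^\ast_{q,0}$ must have $e_t:=p_{q^t}(x)\neq 0$ for some $t$, and since $f^\sigma$ carries infinitely many nonzero coefficients this would produce infinitely many such $t\geq 1$, each giving a nonzero coefficient at $\frak D_{q^t}=\frak D_{p^{\lambda t}}$; one would then check that the indices $\sigma_\ast(p^i-1)$ occurring in $f^\sigma$ never have the single-digit $q$-adic shape $q^t$ with $t\geq 1$. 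I expect this route to be correct but fiddly, since tracking the $q$-adic digit pattern of $\sigma_\ast(p^i-1)$ and ruling out degenerate small-index and $p=2$ cases is precisely the bookkeeping that the automorphism argument lets us bypass.
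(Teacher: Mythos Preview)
Your argument is correct and matches the paper's approach: the corollary is stated without proof there, being an immediate consequence of Proposition~\ref{c20} together with the fact that $\sigma_\ast$ is an automorphism (so its inverse, coming from $\sigma^{-1}\in S_{(q)}$, also stabilizes $\frak{BD}^\ast_{q,0}$). Your careful observation that one needs invertibility of $\sigma_\ast$---not just stability under $\sigma_\ast$ alone---to conclude non-membership is exactly the point the paper leaves implicit.
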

\begin{rems}\label{c22}
Let $X=\{\frak D_{i_j}\}$ be as in Subsection \ref{another} and let
$X^{\sigma}=\{\frak D_{\sigma_\ast i_j}\}$. Then clearly $\sigma_\ast\circ \pi_X=
\pi_{{X^\sigma}}$.\end{rems}
\subsection{Further Actions}\label{further}
Federico Pellarin has kindly pointed out three other actions on the algebra
$R\{\{\frak D\}\}$ in finite characteristic.
\begin{defn}\label{c22} Let $f=\sum a_i \frak D_i$.\\
1. We set $\pi_1(f):=\sum a_i^p \frak D_I$.\\
2. We set $\pi_2(f):=\sum a_i \frak D_{pi}$.\\
3. Let $r\in R$. We set $\pi_3(f):=\sum a_i r^i \frak D_i$ (the ``evaluation'' map).
\end{defn}
It is easy to see that all three of these maps are endomorphisms of $R\{\{\frak D\}\}$
with the first being an obvious automorphism in the case where 
$R$ is a perfect field. They therefore take $\frak{BD}^\ast$ to
itself. It is also straightforward to see that these endomorphisms stabilize 
$\frak{BD}^\ast_{p,0}$ and that the $q$-th power of the first two actions stabilizes
$\frak{BD}^\ast_{q,0}$. Their induced actions on the maps $\pi_X$ of Subsection \ref{another}
are also easy to compute.
\subsection{Final Remarks}\label{final}
We have seen that both the Carlitz construction (Corollary \ref{c16}) and 
our second construction in Subsection \ref{another} give rise to injections of the spaces
$\frak L_q$ into $\frak{BD}^\ast$.

\begin{question}\label{quest}
Do the images of the above injections generate $\frak{BD}^\ast$?\end{question}

In other words, do the additive polynomials ultimately account for  {\em all} elements of
$\frak B$? Note that in characteristic $0$, the elements of $\frak B$ are described by the
{\it Bell Polynomials} \cite{rt1}.

\begin{rems}\label{fed}
The umbral theory of \cite{rt1}, with its ``black magic'' of linear maps etc.,
 has further connections with the arithmetic of
function fields as pointed out by F. Pellarin and which we briefly describe
here. Let $C$ be the Carlitz module 
and let $\omega(t)$ be the Anderson-Thakur function as in \cite{pe1}. Let $\tau$ be the
$q$-th power mapping acting as in \cite{pe1} where one defines (Definition 2.6) the polynomials
$b_j(t)$ by $\tau^j\omega(t)=b_j(t)\omega(t)$. It is readily seen that 
$b_j(t)=\prod_{e=0}^{j-1} (t-\theta^{q^e})$ (and in fact, as shown in 
ibid, these polynomials are universal
in that the coefficients of both the Carlitz exponential and logarithm may be easily expressed
using them). Now define the $A$-linear map from $A[\tau]\to A[t]$ by 
$\tau^i\mapsto b_i(t)$. This gives an isomorphism with the inverse given by
$t^j\mapsto C_{\theta^J}$ as one deduces from the theory of $\omega(t)$. In particular,
we derive another construction of the Carlitz module and another indication of its
ubiquity.
\end{rems}

\end{document}